\newtheorem{theorem}{Theorem}[section]
\newtheorem{lemma}[theorem]{Lemma}
\newtheorem{proposition}[theorem]{Proposition}
\newtheorem{corollary}[theorem]{Corollary}
\newtheorem{maintheorem}{Theorem}
\theoremstyle{definition}
\newtheorem{definition}[theorem]{Definition}
\newtheorem{example}[theorem]{Example}
\theoremstyle{remark}
\newtheorem{remark}[theorem]{Remark}
\numberwithin{equation}{section}
\newcommand{\R}{\mathbb{R}}
\newcommand{\N}{\mathbb{N}}
\newcommand{\eps}{\varepsilon}
\newcommand{\wum}{W^{1,p}(\Omega,\mathbb{R}^d)}
\newcommand{\mlp}[1]{\mathcal{M}^{1,p}_\lambda(#1)}
\newcommand{\mpi}{\mathcal{M}^{1,p}(I)}
\newcommand{\dfx}[2]{\frac{\partial #1}{\partial{#2}}}
\journal{Bulletin des Sciences Mathématiques}
\begin{document}
	
	\begin{frontmatter}

		\title{{\color{magenta} Limits of sequences of volume preserving homeomorphisms in $W^{1,p}$, for $0<p<1$}}
		\author[cmat]{Assis Azevedo\corref{cor1}}
		\ead{assis@math.uminho.pt}
		\author[cmat]{Davide Azevedo}
		\ead{davidemsa@math.uminho.pt}

		\cortext[cor1]{Corresponding author}
	
		\address[cmat]{Centro de Matem\'{a}tica, Universidade do Minho,\\
			Campus de Gualtar,
			4700-057 Braga, Portugal}		
		
		\begin{abstract}
		If $\Omega$ is an open subset of $\R^d$ and $p>0$ then the elements of $W^{1,p}(\Omega)$ can be seen as the pairs $(f,F)\in L^p(\Omega)\times (L^p(\Omega))^d$ such that there exists a sequence $(f_n)_n$ of $C^1$ functions converging to $f$ in $L^p(\Omega)$ such that $(\nabla f_n)_n$ converges to $F$ in $(L^p(\Omega))^d$.  If $p\geq 1$ the pair $(f,F)$ is defined by $f$ as $F$ must be the distributional gradient of $f$. 
		
		If $0<p<1$, there is, in general, a disconnection between $f$ and $F$. For instance,
		Peetre (see \cite{peetre}) proved that, if $d=1$, this disconnection is complete, as any pair $(f,F)\in L^p(\Omega)\times L^p(\Omega)$ is an element of $W^{1,p}(\Omega)$. So $F$ is not defined by $f$ in any sense, as it can be any element of $L^p(\Omega)$.
			
			In this paper we obtain results of this type, concerning $C^1$ homeomorphisms of $\Omega$ that are volume preserving if $d\geq 2$. 
			
			We will show, in particular, that if $H:\Omega\rightarrow SO(d)$ is a Riemann integrable function, then there exists a sequence $(f_n)_n$ of  orientation and volume preserving $C^\infty$ homeomorphisms of $\Omega$ uniformly converging to the identity of $\Omega$ and such that $\left(Df_n\right)_n$ converges to $H$ in $L^p(\Omega)^{d^2}$. 
			
		 If $d=1$ and $I$ is a bounded interval,
		  we will prove that a pair  $(f,F)\in C^1(I)\times L^p(I)$, such that $f'\neq 0$, $f', (f^{-1})'\in L^r(I)$ for some $r>1$, admits a sequence $(f_n)_n$ of $C^1$ homeomorphisms uniformly converging to $f$ and such that $(f_n')_n$ converges in $L^p(I)$ to $F$, if and only if $0\leq \frac{F}{f'}\leq 1$.
		\end{abstract}

		\begin{keyword} Sobolev spaces for $0<p<1$; Douady-Peetre disconnexion phenomenon; volume preserving homeomorphism
			\MSC{Primary  37A25, 37A05, 46E36. Secondary 37A60\sep 37B02, 37C20.}
		\end{keyword}
		
	\end{frontmatter}
	
	\section{Introduction} 	

In this paper, we are interested in the space $W^{1,p}(\Omega,\R^d)$, especially for $0<p<1$, $d\in\N$ and $\Omega$ a bounded open subspace of $\R^d$. 

Recall that $W^{1,p}(\Omega,\R^d)$, for $0<p<\infty$, is the completion of
\begin{equation*}
	X=\left\{f=(f_1,\ldots,f_d)\in C^1(\Omega)^d: \tfrac{\partial f_i}{\partial x_j}\in L^p(\Omega), \text{ for } i,j=1,\ldots,d\right\}
\end{equation*}
relatively to the norm or quasi-norm, for  $1\leq p<\infty$ or $0<p<1$, respectively, defined by
\begin{equation}\label{norma1p}
	\|f\|_{1,p}=\|f\|_p+\|Df\|_{p},
\end{equation}
where $Df$ is the Jacobian matrix of $f$, $\|f\|_p=\max_i\|f_i\|_p$ and  $\|Df\|_p=\max_{i,j}\left\|\dfx{f_i}{x_j}\right\|_p$.

In fact, if $0<p<1$, the proof that \eqref{norma1p} defines a quasinorm is a consequence of the  inequalities $(a+b)^p\leq a^p+b^p$ and $(a+b)^c\leq 2^{c-1}\left(a^c+b^c\right)$, for $a,b\geq 0$ and $c\geq 1$, from where we obtain
\begin{equation}\label{desigualdadep}
	\forall f,g\in  L^p(\Omega) \quad\left\{
	\begin{array}{ll}
		\|f+g\|^p_p&\leq \|f\|^p_p+\|g\|^p_p\\[2mm]
		\|f+g\|_p & \leq 2^\frac{1-p}{p}\left(\|f\|_p+\|g\|_p\right).
	\end{array}\right.
\end{equation}

We have a canonical continuous inclusion 
\begin{equation*}
	\pi=(\pi_1,\pi_2):\wum\longrightarrow L^p(\Omega)^d\times \left(L^p(\Omega)\right)^{d^2}
\end{equation*} 
defined by: 
if $\phi\in\wum$ is represented by a sequence $(f_n)_n$, then $\pi_1(\phi)$ is the limit of $(f_n)_n$ in  $L^p(\Omega)^d$ and $\pi_2(\phi)$ is the  limit of $(Df_n)_n$ in $\left(L^p(\Omega)\right)^{d^2}$.

Then  $\phi$ is represented by the pair $(\pi_1(\phi),\pi_2(\phi))$ and
\begin{equation*}
\|\phi\|_{1,p}=\|\pi_1(\phi)\|_p+\|\pi_2(\phi)\|_p.
\end{equation*}

 Of course, for $0<p<1$, the inequalities in \eqref{desigualdadep} are still valid in $\wum$.

By \cite{H=W}, if $p\geq 1$, then $\pi_2(\phi)$ is the distributional derivative of $\pi_1(\phi)$ and then $\phi$ will be represented only by $\pi_1(\phi)$, that is, $\pi_1$ is injective. So
\begin{equation*}
	\wum=\left\{f\in  L^p(\Omega)^d: \tfrac{\partial f_i}{\partial x_j}\in L^p(\Omega), \text{ for } i,j=1,\ldots,d\right\},
\end{equation*}
where the derivatives are distributional ones.

If $0<p<1$ the situation is completely different. For instance, for $d=1$, Peetre (see \cite{peetre}) proved that not only $\pi_1$ is not injective but $\pi$ is a bijection.\vskip2mm

The following result will be used in the next section.

\begin{remark}
If $0<p<q\leq \infty$ and $\Omega$ is bounded, then, with the convention that $\frac{\infty-p}{p\infty}=\frac{1}{p}$, we have
	\begin{equation}\label{pqmenor}
		\forall f\in L^q(\Omega)\quad		\|f\|_p\leq
		\lambda(\Omega)^{\frac{q-p}{pq}}\|f\|_q,
	\end{equation}
	from where we obtain that $W^{1,q}(\Omega,\mathbb{R}^d)$ is continuously included in $\wum$.
	
	The proof is the same as in the case where $p\geq 1$. In fact, \eqref{pqmenor}, is equivalent to  $\||f|^p\|_1\leq 	\||f|^p\|_{\frac{q}{p}}\,	\|1\|_{\left(\frac{q}{p}\right)'}$.
\end{remark}
\subsection{Main results}
We now consider a subspace of $W^{1,p}(\Omega,\R^d)$, with $0<p<1$, with a more demanding topology.  More specifically, $\mlp{\Omega}$,  the set formed by the pairs $(f, H)\in L^p(\Omega)\times L^p(\Omega)^{d^2}$
admitting a sequence $(f_n)_n$ of  volume preserving $C^1$ homeomorphisms  of $\Omega$ continuous to the boundary, converging uniformly to $f$ and such that $(Df_n)_n$ converge to $H$ in $L^p(\Omega)^{d^2}$. In these conditions, $f$ also preserves the volume, as it is the uniform limit of volume preserving homeomorphisms.

In the case $d=1$, as there are no non-trivial such homeomorphisms, we will work in $\mathcal{M}^{1,p}(I)$, for $I$ an open bounded interval of $\R$, which is defined as the space above, without the volume preserving condition.

The paper \cite{aabt} makes use of these spaces in the context of Dynamical Systems in Sobolev Spaces, proving that, for a closed connected $d$-dimensional manifold $X$, $d \geq 2$ and $0< p<1$, the set of the ergodic elements in  $\mlp{X}$ is generic, and presenting an example of $(g,h)\in\mlp{I^d}$, for $0<p<1$, such that $h$ is not the distributional derivative of $g$.

Haakan Hedenmalm and Aron Wennman, in \cite{haakan}, explore a sharp phase transition phenomenon which occurs for $L^p$-Carleman classes with exponents $0<p<1$. 
	
	 Peetre (\cite{peetre}) showed that in the definition of $W^{1,p}(I)$, we can work with $C^1_+$ functions instead of $C^1$ ones. Here we say that a function $f$ is $C^1_+(0,1)$ if it is continuous, has continuous derivative except in a finite set, where $f$ admits left and right derivatives. Analysing the proof, it is clear that we can do the same if we are dealing only with homeomorphisms. That is, if $f$ is a $C^1_+$ homeomorphism then there exists a sequence of $C^1$ homeomorphism converging in $W^{1,p}(I)$ to $f$. In fact that sequence also converge uniformly.

In Section \ref{d=1}, we will find, for some $C^1$ homeomorphisms $f$, all the pairs $(f,F)$ that belongs to $\mathcal{M}^{1,p}(I)$.

	\begin{maintheorem}\label{TheoremA}			
	Let  $0<p<1$, $f$ be a $C^1$ homeomorphism of a bounded open interval $I$ such that and  $f',\big(f^{-1}\big)'\in L^r$, for some $r>1$. 
	
	If $F\in L^p(I)$ then $(f,F)\in \mpi$ if and only if $0\leq \frac{F}{f'}\leq 1$. 
	
	In particular, if $(f,F)\in\mpi$ then $F\in L^r(I)$ and $(f,F)\in\mathcal{M}^{1,q}(I)$, for all $0<q<1$.
\end{maintheorem}

Section \ref{technical} will consist on proofs of some technical results to be use used in Section \ref{dgeq2}. In particular, we will prove that given $r>s>0$ and $H\in SO(d)$ there exists  an orientation and volume preserving $C^{\infty}$ diffeomorphism $G:\R^d\rightarrow\R^d$ that is equal to $H$ in the disk $D(0,s)$ and to the identity outside the disk $D(0,r)$.

In Section \ref{dgeq2}, we will find a family of elements $H\in L^p(\Omega)^{d^2}$ such that $(I_d,H)\in\mlp{\Omega}$. We will also analyse the case where, in the place of the identity function, we consider a volume preserving $C^1$ homeomorphism. 

\begin{maintheorem}\label{TheoremB} Let $d\geq 2$, $0<p<1$, $\Omega$ a bounded  open subset of $\R^d$. If $H:\Omega\longrightarrow SO(d)$ is Riemann integrable then $(I_d,H)\in\mlp{\Omega}$.
\end{maintheorem}

 We note that the usual methods of approximating functions, using mollifiers and convolution product, cannot be used here since all the functions in question are homeomorphisms, and still preserving the measure in Section \ref{dgeq2}. 

\section{The proof of Theorem \ref{TheoremA}}\label{d=1}

\begin{definition} Let $I$ be a bounded interval of $\R$ and $0<p<1$. We define $\mpi$ as  the set formed by the pairs $(f, F)\in L^p(I)\times L^p(I)$, admitting a sequence $(f_n)_n$ of $C^1$ homeomorphisms  of $I$, converging uniformly to $f$ and such that $(f'_n)_n$ converge to $F$ in $L^p(I)$.
\end{definition}

We start with an example, adapted  from \cite{peetre} (see also \cite{haakan}), that shows that $(I_d,0)\in\mathcal{M}^{1,p}(0,1)$, for all $0<p<1$. This example will be crucial in the proof of  Theorem \ref{TheoremA}.

\begin{example} \label{peetre2}
		For $n\in\N$, let $a_n,b_n>0$ with $a_n+b_n=\frac{1}{n}$ and consider $f_n:[0,\frac{1}{n}]\rightarrow [0,\frac{1}{n}]$ defined by	
			\begin{equation*}
			f_n(x)=
			\left\{\begin{array}{ll}
				g_n(x) & \text{if $x\leq a_n$}\\[2mm]
				\frac{a_n}{b_n}\,g_n(\frac{a_n}{b_n}(x-a_n))+b_n & \text{if $x> a_n$,}
			\end{array}\right.
		\end{equation*}
		where $g_n:[0,a_n]\rightarrow [0,b_n]$ is a strictly increasing $C^1$ homeomorphism  admitting right derivative in $0$ and left derivative in $a_n$ (for example $g_n(x)=\frac{b_n}{a_n}\,x$).
		
		We extend $f_n$ to a (increasing) homeomorphism from $[0,1]$ to $[0,1]$ by defining $f_n(\frac{k}{n}+x)=f_n(x) +\frac{k}{n}$, if $x\in \left]\frac{k}{n},\frac{k+1}{n}\right]$, with $k\in\{0,1\ldots,n-1\}$ (see Figure \ref{figura}). 		
		Notice that $f_n\in C^1_+(0,1)$ has continuous derivative except  possibly in the points $\frac{k}{n}+sa_n$, with $k=1,\ldots,n-1$ and $s=0,1$, where $f$ admits left and right derivatives. 
		
		Of course we can choose $g_n$ in such a way that $f_n$ is $C^k$ for any given $k\in\N\cup\{\infty\}$. For example, if we choose $g_n$ to be a $C^\infty$ function such that $g_n^{(k)}(0)=g_n^{(k)}(a_n)=0$ for all $k\in\N$ (a ``jump function'') then $f_n$ is $C^\infty$. An explicit example of such a function is defined by
		\begin{equation*}
			g_n(x)=b_n\frac{ e^{-\frac{a_n}{x}} }{ e^{-\frac{a_n}{x}}+e^{-\frac{a_n}{a_n-x}}}.
		\end{equation*}

		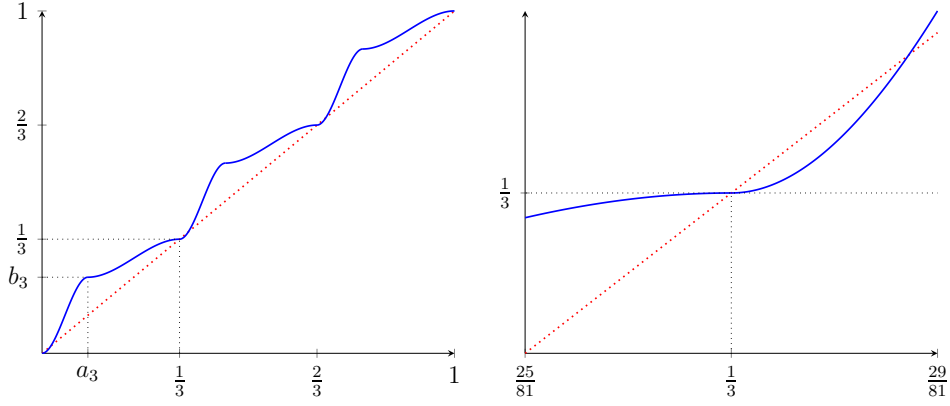
\begin{figure}[htb]
			\centering	\begin{tikzpicture}[xscale=0.8,yscale=0.8]
				]
				\begin{axis}[xtick={1/9,1/3,2/3,1},xticklabels={$a_3$,$\frac{1}{3}$,$\frac{2}{3}$, $1$},ytick={2/9,1/3,2/3,1},yticklabels={$b_3$,$\frac{1}{3}$,$\frac{2}{3}$, $1$},
					axis lines = left,
					]
					\addplot [ thick, dotted,
					domain=0:1, 
					samples=100, 
					color=red,
					]
					{x};
					\addplot [thick,
					domain=0:1/9, 
					samples=100, 
					color=blue,
					]
					{54*x^2*(-6*x+1)};
					\addplot [ thick,
					domain=1/9:1/3, 
					samples=100, 
					color=blue,
					]
					{-9/4*x*(3*x-1)^2+1/3};
					\addplot [thick,
					domain=1/3:4/9, 
					samples=100, 
					color=blue,
					]
					{6*(3*x-1)^2*(-6*x+3)+1/3};
					\addplot [ thick,
					domain=4/9:2/3, 
					samples=100, 
					color=blue,
					]
					{-9/4*(x-1/3)*(3*x-2)^2+2/3};
					\addplot [thick,
					domain=2/3:7/9, 
					samples=100, 
					color=blue,
					]
					{6*(3*x-2)^2*(-6*x+5)+2/3};
					\addplot [thick,
					domain=7/9:1, 
					samples=100, 
					color=blue,
					]
					{-9/4*(x-2/3)*(3*x-3)^2+1};
					\draw[dotted] (axis cs:1/9,0) -- (axis cs:1/9, 2/9);
					\draw[dotted] (axis cs:0,2/9) -- (axis cs:1/9, 2/9);
					\draw[dotted] (axis cs:1/3,0) -- (axis cs:1/3, 1/3);
					\draw[dotted] (axis cs:0,1/3) -- (axis cs:1/3, 1/3);
				\end{axis}
			\end{tikzpicture}
			\ 
			\centering	\begin{tikzpicture}[xscale=0.8,yscale=0.8]
				]
				\begin{axis}[xtick={25/81,1/3,29/81},xticklabels={$\frac{25}{81}$,$\frac{1}{3}$, $\frac{29}{81}$},ytick={2/9,1/3,4/9},yticklabels={$\frac{2}{9}$,$\frac{1}{3}$, $\frac{4}{9}$},
					axis lines = left,
					]
					\addplot [ thick, dotted,
					domain=25/81:29/81, 
					samples=100, 
					color=red,
					]
					{x};
					\addplot [ thick,
					domain=25/81:1/3, 
					samples=100, 
					color=blue,
					]
					{-9/4*x*(3*x-1)^2+1/3};
					\addplot [thick,
					domain=1/3:29/81, 
					samples=100, 
					color=blue,
					]
					{6*(3*x-1)^2*(-6*x+3)+1/3};
					\draw[dotted] (axis cs:1/9,0) -- (axis cs:1/9, 2/9);
					\draw[dotted] (axis cs:0,2/9) -- (axis cs:1/9, 2/9);
					\draw[dotted] (axis cs:1/3,0) -- (axis cs:1/3, 1/3);
					\draw[dotted] (axis cs:0,1/3) -- (axis cs:29/81, 1/3);
				\end{axis}
			\end{tikzpicture}
			\caption{In the left, the graphic of $f_3$ (with $a_3=\frac{1}{9}$ and $g_3(x)=54x^2(1-6x)$). In this case $f_3$ is a $C^1$ function. In the right, a zoom of the graphic of $f_3$ around $x=\frac{1}{3}$.}
			\label{figura}
		\end{figure}

		Since $\|f_n-I_d\|_\infty\leq \frac{1}{n}$ then $(f_n)_n$ converges uniformly to $I_d$. On the other hand, using \eqref{pqmenor}, 
		\begin{equation*}
			\|g_n'\|_p^p\leq a_n^{1-p}\|g_n'\|_1^p= a_n^{1-p}\int_0^{a_n}g_n'(x)\,dx=a_n^{1-p}b_n^p,
		\end{equation*}
		and then
		\begin{align*}
			\|f'_n\|^p_p & =n\left(\int_0^{a_n}g'_n(x)^pdx
			+\int_{a_n}^{\frac{1}{n}}h'_n(x)^pdx\right)\\
			& = n\left(	\int_0^{a_n}g'_n(x)^pdx
			+b_n^{1-2p}a_n^{2p-1}\int_0^{a_n}g'_n(y)^pdy\right)\\
			& \leq 
			n(a_n^{1-p}b_n^p+a_n^pb_n^{1-p})=(na_n)^{1-p}(nb_n)^p+(nb_n)^{1-p}(na_n)^p.
		\end{align*}
		
		Choosing $a_n$ such that $\lim_n na_n=0$, which implies $\lim_n nb_n=1$, then $(f_n')_n$ converges to $0$ in $L^p(0,1)$.
\end{example}

\begin{remark}\label{Idc}
	If $c\in[0,1]$ then $(I_d,c)\in {\mathcal M}^{1,p}(0,1)$. To see this, consider the sequence $h_n=\big(c\,I_d+(1-c) f_n\big)_n$, where $(f_n)_n$ is a sequence of $C^1$ functions as defined in Example \ref{peetre2}. It is clear that, for all $n\in\N$, $h_n$ is a $C^1$ homeomorphism of $[0,1]$, as $h_n'\geq 0$ and $h_n(0)=0$ and $h_n(1)=1$. Of course we can defined $f_n$ in such a way that $h_n$ is $C^\infty$.
\end{remark}

The next two results prove Theorem \ref{TheoremA} in the particular case where $f=I_d$.

\begin{lemma}\label{0h1}
	If $(I_d,H)\in\mathcal{M}^{1,p}(0,1)$ then $0\leq H\leq 1$.
\end{lemma}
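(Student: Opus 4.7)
The plan is to extract pointwise information about $H$ from the uniform convergence $f_n\to I_d$ together with the $L^p$ convergence $f_n'\to H$, using Fatou's lemma and the Lebesgue differentiation theorem. I start with the lower bound $H\geq 0$. Each $f_n$ is a $C^1$ homeomorphism of $(0,1)$ onto $(0,1)$, hence strictly monotone. Fixing three points $0<x_1<x_2<x_3<1$, uniform convergence gives $f_n(x_i)\to x_i$, so for all sufficiently large $n$ one has $f_n(x_1)<f_n(x_2)<f_n(x_3)$, which forces $f_n$ to be increasing. Thus $f_n'\geq 0$ on $(0,1)$ for $n$ large. Since $0<p<1$, $L^p$ convergence on the bounded interval $(0,1)$ implies convergence in measure, so a subsequence $(f_{n_k}')_k$ converges to $H$ almost everywhere, and non-negativity is preserved in the limit, giving $H\geq 0$ a.e.

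For the upper bound, I would derive a local estimate via Fatou. Fix any $a,b\in(0,1)$ with $a<b$. By the fundamental theorem of calculus applied on the compact interval $[a,b]\subset(0,1)$,
$$\int_a^b f_n'(x)\,dx=f_n(b)-f_n(a),$$
and by uniform convergence the right-hand side tends to $b-a$. Applying Fatou's lemma to the non-negative a.e.\ convergent subsequence yields
$$\int_a^b H(x)\,dx\leq\liminf_k\int_a^b f_{n_k}'(x)\,dx=b-a.$$

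To conclude, I would invoke the Lebesgue differentiation theorem: at almost every $x\in(0,1)$,
$$H(x)=\lim_{h\to 0^+}\frac{1}{2h}\int_{x-h}^{x+h}H(t)\,dt\leq\lim_{h\to 0^+}\frac{2h}{2h}=1,$$
so $H\leq 1$ almost everywhere. I do not anticipate a serious obstacle; the only mildly delicate point is the orientation step, dispatched by the three-point argument, after which Fatou and Lebesgue differentiation deliver both bounds cleanly.
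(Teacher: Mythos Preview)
Your proof is correct and takes a genuinely different route from the paper for the upper bound. For $H\geq 0$, both arguments are essentially the same: monotonicity of each $f_n$ plus uniform convergence to $I_d$ force $f_n$ to be increasing for large $n$, hence $f_n'\geq 0$, and a.e.\ convergence of a subsequence transfers this to $H$. (Two points would already suffice in your orientation step, but three do no harm.)

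For $H\leq 1$, the paper argues by contradiction: assuming $H\geq b>1$ on a set $E$ of positive measure, it approximates $E$ from outside by a finite union of open intervals $V$, uses Egorov-type sets $A_n=\{x\in F:f_k'(x)\geq tb\ \text{for all}\ k\geq n\}$ to bound $\int_V f_n'$ from below, and compares with the telescoping bound $\int_V f_n'\leq \lambda(V)+2N\varepsilon$ coming from uniform convergence, reaching a contradiction after tuning parameters. Your argument bypasses all of this: from $\int_a^b f_{n_k}'=f_{n_k}(b)-f_{n_k}(a)\to b-a$ and Fatou you get $\int_a^b H\leq b-a$ for every $[a,b]\subset(0,1)$, and Lebesgue differentiation finishes. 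This is shorter and conceptually cleaner; the only point worth making explicit is that the Fatou bound itself guarantees $H\in L^1_{\mathrm{loc}}(0,1)$ (indeed $H\in L^1(0,1)$), so the differentiation theorem legitimately applies---a priori you only know $H\in L^p$ with $p<1$. The paper's approach, while heavier, has the virtue of being entirely elementary and not invoking the differentiation theorem.
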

\begin{proof}
Let $(f_n)_n$ be a sequence of $C^1$ homeomorphisms of $[0,1]$ converging uniformly to the identity, such that $(f'_n)_n$ converges in $L^p(0,1)$, and then almost everywhere, to $H$.
As $f_n(0),f_n(1)\in\{0,1\}$, $\lim_nf_n(0)=0$, $\lim_nf_n(1)=1$ and $f_n$ is monotone, then, for almost every $n$, $f_n$ is increasing, which implies $f_n'\geq 0$ and so $H\geq 0$.

Suppose now that $H\not\leq 1$. As $H$ is the supremum of the simple functions below $H$, there exist $b>1$ and $E$, a measurable subset of $[0,1]$, such that $\lambda(E)>0$ and  $H\geq b$ in $E$. For $\delta>0$, to be defined depending on $\lambda(E)$ and $b$, let $U$ be an open set containing $E$ such that $\lambda(U\setminus E)<\delta$. Let $(U_i)_i$ be a sequence of open disjoint intervals whose union is $U$. As $\lambda(E)=\lambda(U\cap E)= \sum_{i=1}^\infty\lambda(U_i\cap E)$, there exists $N\in\N$ such that, if $V=\cup_{i=1}^NU_i$ and $F=E\cap V$,
\begin{equation*}
	F\subseteq V,\ \lambda(F)>\tfrac{1}{2}\lambda(E),\ \lambda(V\setminus F)<\delta.
\end{equation*}

Consider $0<t<1$ such that $t^2b-1>0$ and
\begin{equation*}
	A_n=\big\{x\in F: f'_k(x)\geq tb,\text{ for all $k\geq n$}\big\}.
\end{equation*} 
As the sequence $(A_n)_n$ is increasing and, by hypothesis on $(f'_n)_n$, almost every element of $F$ is in  $\cup_n A_n$ then, denoting the Lebesgue measure by $\lambda$, we have
\begin{equation*}
	\lim_n\lambda(A_n)=\lambda(F).
\end{equation*}

For $\eps>0$, consider $n\in\N$ such that $\lambda(A_n)\geq t\lambda(F)$ and $|f_n(x)-x|\leq \eps$, for all $x\in [0,1]$. Then, if $U_i=]\alpha_i,\beta_i[$ for $i=1,\ldots,N$,
\begin{align*}
	\int_{V}f_n' & \geq\int_{A_n}f'_n\geq t^2b\lambda(F),\\	
	\int_{V}f_n' &=\sum_{i=1}^N\big(f_n(\beta_i)-f_n(\alpha_i)\big)\\
	 & \leq \sum_{i=1}^N\big(\beta_i-\alpha_i+2\eps\big)=\lambda(V)+2N\eps\leq \lambda(F)+\delta+2N\eps.
\end{align*}
From this, we obtain $0<(t^2b-1)\lambda(F)\leq \delta+2N\eps$, for all $\eps>0$, and then $0<(t^2b-1)\lambda(F)\leq \delta$ which is a contradiction, by choosing $\delta<\frac{1}{2}(t^2b-1)\lambda(E)$ and recalling that $\tfrac{1}{2}\lambda(E)<\lambda(F)$.
 \end{proof}

 We are now in the conditions to replace the constant functions, referred to in Remark \ref{Idc}, by step functions $H$, with $0\leq H\leq 1$. By density the same will be true if $H$ is any $L^p$ function with $0\leq H\leq 1$.

\begin{proposition} \label{l1} If $0<p<1$, $I$ a non-empty  open bounded interval of $\R$ and $H\in L^p(I)$ then $(I_d,H)\in {\mathcal M}^{1,p}(I)$ if and only if $0\leq H\leq 1$.
	
In particular, if $(I_d,H)\in\mpi$ then $H\in L^\infty(I)$ and $(I_d,H)\in\mathcal{M}^{1,q}(I)$, for  $0<q<1$.  
\end{proposition}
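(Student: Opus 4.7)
My plan is to prove both directions separately, with the sufficiency broken into three steps: closedness of $\mpi$ under the natural topology, treatment of step functions via the convex-combination trick of Remark \ref{Idc} glued on a partition, and an $L^p$-density argument.

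For necessity I will rescale Lemma \ref{0h1} from $(0,1)$ to an arbitrary bounded open $I=(\alpha,\beta)$: an approximating sequence $(f_n)_n$ of $C^1$ homeomorphisms of $I$ transports via $\tilde f_n(y)=(f_n(\alpha+(\beta-\alpha)y)-\alpha)/(\beta-\alpha)$ to $C^1$ homeomorphisms of $(0,1)$ converging uniformly to $I_d$ whose derivatives converge in $L^p(0,1)$ to $\tilde H(y)=H(\alpha+(\beta-\alpha)y)$, so Lemma \ref{0h1} forces $0\leq H\leq 1$ almost everywhere. For the converse I first note that $\mpi$ is closed under $L^\infty\times L^p$ limits: if $(g_n,G_n)\in\mpi$ with $g_n\to g$ uniformly and $G_n\to G$ in $L^p(I)$, a diagonal selection of $C^1$ homeomorphisms $f_n$ satisfying $\|f_n-g_n\|_\infty<1/n$ and $\|f_n'-G_n\|_p^p<1/n$ yields, via the quasi-triangle inequality \eqref{desigualdadep}, $f_n\to g$ uniformly and $f_n'\to G$ in $L^p(I)$.

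The core construction handles step functions. For $H=\sum_{k=1}^N c_k\chi_{(a_{k-1},a_k)}$ with $a_0<\cdots<a_N$ partitioning $I$ and $0\leq c_k\leq 1$, I take on each $[a_{k-1},a_k]$ a rescaled copy $\phi_n^{(k)}:[a_{k-1},a_k]\to[a_{k-1},a_k]$ of the homeomorphism from Example \ref{peetre2}: it fixes the endpoints, converges uniformly to the identity, and has $(\phi_n^{(k)})'\to 0$ in $L^p$. I then set
\begin{equation*}
h_n(x)=a_{k-1}+c_k(x-a_{k-1})+(1-c_k)\bigl(\phi_n^{(k)}(x)-a_{k-1}\bigr)\qquad\text{for }x\in[a_{k-1},a_k].
\end{equation*}
Each piece is a strictly increasing convex combination of $I_d$ and $\phi_n^{(k)}$ that preserves its subinterval, so the pieces glue into a $C^1_+$ homeomorphism of $I$. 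As in Remark \ref{Idc}, $h_n\to I_d$ uniformly and $h_n'=c_k+(1-c_k)(\phi_n^{(k)})'\to H$ in $L^p(I)$; the $C^1_+$-to-$C^1$ approximation recalled in the introduction then gives $(I_d,H)\in\mpi$. For general $H\in L^p(I)$ with $0\le H\le 1$, boundedness lets me approximate $H$ in $L^p(I)$ by step functions $H_m$ also satisfying $0\le H_m\le 1$ (for instance by dyadic truncation), and the closedness established above closes the loop.

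The ``in particular'' clause is then immediate: any $(I_d,H)\in\mpi$ satisfies $0\le H\le 1$ by necessity, hence $H\in L^\infty(I)\subset L^q(I)$ for every $0<q<1$, and the sufficiency applied with $q$ in place of $p$ yields $(I_d,H)\in\mathcal{M}^{1,q}(I)$. The delicate point I expect to be the step-function construction: each $\phi_n^{(k)}$ must fix the endpoints of its own subinterval so the glued map is a homeomorphism of all of $I$, while the convex combination of Remark \ref{Idc} has to simultaneously deliver uniform convergence to $I_d$ and $L^p$-convergence of the derivative to the prescribed value $c_k$ on each piece.
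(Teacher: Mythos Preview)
Your proof is correct and follows essentially the same route as the paper: necessity via Lemma \ref{0h1} (the paper simply says ``without loss of generality $I=(0,1)$'' where you rescale explicitly), sufficiency for step functions via the glued convex-combination construction of Remark \ref{Idc}, and then a density/diagonal argument for general $H$. The only cosmetic differences are that you isolate the $L^\infty\times L^p$-closedness of $\mpi$ as a separate observation and approximate $H$ directly in $L^p$, whereas the paper approximates in $L^1$ and uses \eqref{pqmenor} inline.
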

\begin{proof} Without loss of generality, we will do the proof for $I=(0,1)$. Taking into account Lemma \ref{0h1} we are left to prove that, if
	$0\leq H\leq 1$ then $(I_d,H)\in {\mathcal M}^{1,p}(I)$. 
	
Suppose first that $H$ is a step function such that $0\leq H\leq 1$ and consider $N\in\N$, $0=a_0<a_1<\cdots <a_{N-1}<a_N=1$ a partition of $[0,1]$ and $c_1,\ldots,c_N\in [0,1]$ such that $H(x)=c_i$, for all $x\in [a_{i-1},a_i[$.
	
	Consider the sequence $(\phi_n)_n$ defined by
	\begin{equation*}
		\forall x\in [a_{i-1},a_i]\quad \phi_n(x)=(a_i-a_{i-1})\,h_{n,i}\Big(\frac{x-a_{i-1}}{a_i-a_{i-1}}\Big)+a_{i-1}
	\end{equation*}
	where, for $i=1,\ldots,N$,  $(h_{n,i})_n$ is the sequence of $C^\infty$ homeomorphisms of $(0,1)$ described in Remark \ref{Idc} for $c=c_i$ (see Figure \ref{figura_derivada} for an example of the derivative of these functions).  It is clear that, for all $n\in\N$, $\phi_n$ is a $C^1_+$ homeomorphism of $(0,1)$, as $\phi_n$ has derivatives (of all orders) except in $a_i$ ($i=0,1,\ldots, N$), were there are only lateral derivatives.
	
	To see that $(\phi_n)_n$ converges uniformly to the identity, just notice that
	\begin{align*}
		\big\|\phi_n-I_d\big\|& =\max_{i=1,\ldots,N}\left\{ (a_i-a_{i-1})\left[\max_{x\in[a_{i-1},a_i]}\left|h_{n,i}\Big(\frac{x-a_{i-1}}{a_i-a_{i-1}}\Big)-\frac{x-a_{i-1}}{a_i-a_{i-1}}\right|\right]\right\}\\
		& =\max_{i=1,\ldots,N}\left\{ (a_i-a_{i-1})\left[\max_{y\in[0,1]}\left|h_{n,i}(y)-y\right|\right]\right\}\\
		 &  =\max_{i=1,\ldots,N}\left\{ (a_i-a_{i-1})\right\}\,\|h_{n,i}-I_d\|_\infty.
	\end{align*}
	
On the other hand, $(\phi_n')_n$ converges to $H$ in $L^p(I)$ as
\begin{align*}
	\|\phi_n'-H|_p^p & =\sum_{i=1}^N\int_{a_{i-1}}^{a_i}\left| h_{n,i}'\Big(\frac{x-a_{i-1}}{a_i-a_{i-1}}\Big)-c_i\right|^pdx\\
	& =\sum_{i=1}^N\int_0^1\left|h_{n,i}'(y)-c_i\right|^p(a_i-a_{i-1})\,dy\\
		& \leq N\,\|h_{n,i}'-c_i\|_p^p.
\end{align*}	

Consider now the general case. Given $n\in\N$, consider a step function $H_n:I\rightarrow I$ such that $\|H-H_n\|_1\leq \frac{1}{n}$. Using the above, let $f_n$ be a $C^1_+$ homeomorphism such that $\|f_n-I_d\|_p\leq \frac{1}{n}$ and $\|Df_n-H_n\|_p\leq \frac{1}{n}$. Then, using \eqref{desigualdadep} and \eqref{pqmenor},
	\begin{equation*}
		\|Df_n-H\|_p^p\leq \|Df_n-H_n\|_p^p+ \|H_n-H\|_p^p\leq  \|Df_n-H_n\|_p^p+\|H_n-H\|_1^p,
	\end{equation*}
	from where we conclude that $(I_d,H)\in {\mathcal M}^{1,p}(I)$.
	\end{proof}

	\begin{figure}[H]
	\begin{tikzpicture}[>=stealth]
	\begin{axis}[xtick={2/5,3/5,1},xticklabels={$\frac{2}{5}$,$\frac{3}{5}$, $1$},ytick={7/2,19/4,31/6},yticklabels={$\frac{7}{2}$,$\frac{11}{4}$, $\frac{31}{6}$},width=\textwidth,    
		height=0.75\textwidth,
		domain=0:1,xmin=0,xmax=1.1,
		ymin=0,ymax=5.7,
		axis x line=middle,
		axis y line=middle,
		axis line style=->,
		xlabel={$x$},
		ylabel={$y$},
		]
		\addplot[no marks,red] expression[domain=0:2/5,samples=100]{1/4}; 
		\addplot[no marks,red] expression[domain=2/5:3/5,samples=100]{1/2}; 
		\addplot[no marks,red] expression[domain=3/5:1,samples=100]{1/6}; 
		\addplot[no marks,blue] expression[domain=0:2/125,samples=100]{-70312.5*x^2+1125.*x+.25}; 
		\addplot[no marks,blue] expression[domain=2/125:2/25,samples=100]{-274.66*x^2-.10156+26.367*x}; 
		\addplot[no marks,blue]			expression[domain=2/25:12/125,samples=100]{-539.75-70312.*x^2+12375.*x}; 
		\addplot[no marks,blue]
		expression[domain=12/125:4/25,samples=100]{-274.66*x^2-3.9688+70.312*x}; 
		\addplot[no marks,blue]
		expression[domain=4/25:22/125,samples=100]{-1979.8-70312.*x^2+23625.*x}; 
		\addplot[no marks,blue]
		expression[domain=22/125:6/25,samples=100]{-274.66*x^2-11.352+114.26*x }; 
		\addplot[no marks,blue]
		expression[domain=6/25:32/125,samples=100]{-4319.750000-70312.50000*x^2+34875.*x}; 
		\addplot[no marks,blue]
		expression[domain=32/125:8/25,samples=100]{-274.66*x^2-22.250+158.20*x }; 
		\addplot[no marks,blue]
		expression[domain=8/25:42/125,samples=100]{-7559.750000-70312.50000*x^2+46125.*x}; 
		\addplot[no marks,blue]
		expression[domain=42/125:2/5,samples=100]{ -274.66*x^2-36.664+202.15*x}; 
		\addplot[no marks,blue]
		expression[domain=2/5:51/125,samples=100]{-187500.*x^2-30599.50000+151500.*x};
	\addplot[no marks,blue]
	expression[domain=51/125:11/25,samples=100]{-732.42*x^2+621.09*x-130.98 }; 
	\addplot[no marks,blue]
	expression[domain=11/25:56/125,samples=100]{-187500.*x^2-36959.50000+166500.*x}; 
	\addplot[no marks,blue]
	expression[domain=56/125:12/25,samples=100]{-732.42*x^2-157.+679.69*x }; 
	\addplot[no marks,blue]
	expression[domain=12/25:61/125,samples=100]{-187500.*x^2-43919.50000+181500.*x}; 
	\addplot[no marks,blue]
	expression[domain=61/125:13/25,samples=100]{-732.42*x^2-185.36+738.28*x}; 
	\addplot[no marks,blue]
	expression[domain=13/25:66/125,samples=100]{-187500.*x^2-51479.50000+196500.*x}; 
	\addplot[no marks,blue]
	expression[domain=66/125:14/25,samples=100]{-732.42*x^2-216.06+796.88*x }; 
	\addplot[no marks,blue]
	expression[domain=14/25:71/125,samples=100]{-187500.*x^2-59639.50000+211500.*x }; 
	\addplot[no marks,blue]
	expression[domain=71/125:3/5,samples=100]{ -732.42*x^2+855.47*x-249.11}; 
	\addplot[no marks,blue]
	expression[domain=3/5:77/125,samples=100]{-78125.*x^2-28874.83333+95000.*x }; 
	\addplot[no marks,blue]
	expression[domain=77/125:17/25,samples=100]{-305.18*x^2-127.67+395.51*x}; 
	\addplot[no marks,blue]
	expression[domain=17/25:87/125,samples=100]{-36974.83333-78125.*x^2+107500.*x }; 
	\addplot[no marks,blue]
	expression[domain=87/125:19/25,samples=100]{-161.26-305.18*x^2+444.34*x }; 
	\addplot[no marks,blue]
	expression[domain=19/25:97/125,samples=100]{-78125.*x^2+120000.*x-46074.83333 }; 
	\addplot[no marks,blue]
	expression[domain=97/125:21/25,samples=100]{-305.18*x^2-198.76+493.16*x }; 
	\addplot[no marks,blue]
	expression[domain=21/25:107/125,samples=100]{ -78125.*x^2+132500.*x-56174.83333}; 
	\addplot[no marks,blue]
	expression[domain=107/125:23/25,samples=100]{-305.18*x^2-240.17+541.99*x }; 
	\addplot[no marks,blue]
	expression[domain=23/25:117/125,samples=100]{ -78125.*x^2-67274.83333+145000.*x}; 
	\addplot[no marks,blue]
	expression[domain=117/125:1,samples=100]{-305.18*x^2-285.48+590.82*x};
\end{axis}
\end{tikzpicture}
	\caption{In blue the graphic of $\phi_5'$ as defined in the proof of Proposition \ref{l1}, relatively to the step function in red $\frac{1}{4}\chi_{[0,\frac{2}{5}]}+\frac{1}{2}\chi_{[\frac{2}{5},\frac{3}{5}]}+\frac{1}{6}\chi_{[\frac{3}{5},1]}$.}
	\label{figura_derivada}
\end{figure}
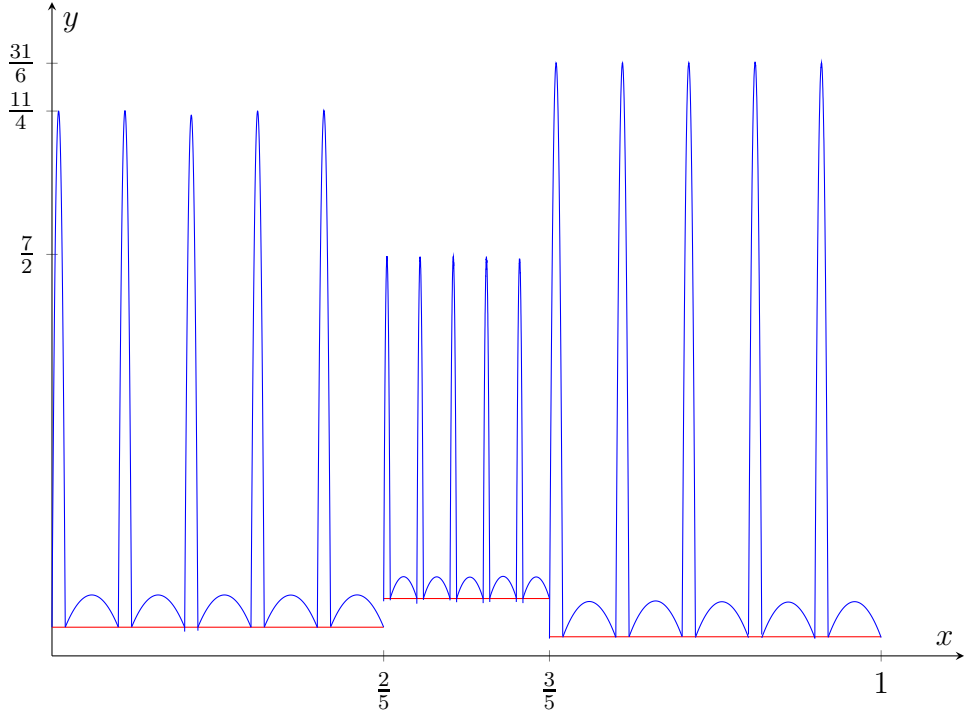

The next result is a tool that will enable us to go from the  characterization of  the pairs $(I_d,H)\in\mpi$ to the case described in Theorem \ref{TheoremA}.
		\begin{proposition}			
		Let $r>1$, $g,h$ be $C^1_+$ homeomorphisms of a bounded open interval $I$ such that 
		$\varphi'\in L^r$, where $\varphi=h^{-1}\circ g$. 
		
		If $0<q<1$ and $(g,G)\in \mathcal{M}^{1,\frac{qr}{r-1+q}}(I)$ then $(h,H)\in\mathcal{M}^{1,q}(I)$, where $H=\big(G\circ \varphi^{-1}\big)\cdot \big(\varphi^{-1}\big)'$.
	\end{proposition}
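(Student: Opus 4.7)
The natural approach is to let $(g_n)_n$ be a sequence of $C^1_+$ homeomorphisms of $I$ witnessing $(g, G) \in \mathcal{M}^{1,p}(I)$ with $p := qr/(r-1+q)$, and to define the approximating sequence for $(h, H)$ by $h_n := g_n \circ \varphi^{-1}$. Since $\varphi^{-1} = g^{-1} \circ h$ is a composition of $C^1_+$ homeomorphisms of $I$, each $h_n$ is likewise a $C^1_+$ homeomorphism of $I$. Uniform convergence $h_n \to h = g \circ \varphi^{-1}$ is immediate from $\|h_n - h\|_\infty \leq \|g_n - g\|_\infty \to 0$.

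The core of the argument is the estimate on $\|h_n' - H\|_q$. The chain rule gives, almost everywhere, $h_n' = (g_n' \circ \varphi^{-1})\cdot (\varphi^{-1})'$ and $H = (G \circ \varphi^{-1})\cdot (\varphi^{-1})'$. Performing the change of variables $x = \varphi(y)$, and using $(\varphi^{-1})'(\varphi(y)) = 1/\varphi'(y)$ (valid a.e., since $\varphi$ is a monotone absolutely continuous bijection with $\varphi' > 0$ a.e.), one transforms
\begin{equation*}
\int_I |h_n'(x) - H(x)|^q\,dx \;=\; \int_I |g_n'(y) - G(y)|^q\,\varphi'(y)^{1-q}\,dy.
\end{equation*}

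The key step is then to apply Hölder's inequality to this weighted integral with conjugate exponents $\alpha = \frac{r}{r-1+q}$ and $\beta = \frac{r}{1-q}$, chosen precisely so that $q\alpha = p$ (matching the hypothesis $g_n' \to G$ in $L^p$) and $(1-q)\beta = r$ (matching $\varphi' \in L^r$). Both exponents are $\geq 1$ when $r > 1$ and $0 < q < 1$, and they yield
\begin{equation*}
\|h_n' - H\|_q^q \;\leq\; \|g_n' - G\|_p^q \cdot \|\varphi'\|_r^{1-q},
\end{equation*}
which tends to $0$ as $n \to \infty$, concluding the proof.

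The main technical point to pin down is the justification of the change of variables and chain rule at the level of $C^1_+$ (rather than $C^1$) maps on the possibly nontrivial set where $\varphi'$ vanishes; this is standard for monotone absolutely continuous homeomorphisms and amounts to observing that $\{y : \varphi'(y) = 0\}$ has $\varphi$-image of measure zero, so both identities hold outside a negligible set. Once this is in hand, the proof is the two-line computation above.
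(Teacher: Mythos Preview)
Your proof is correct and follows essentially the same route as the paper: define $h_n=g_n\circ\varphi^{-1}$, verify uniform convergence via $\|h_n-h\|_\infty=\|g_n-g\|_\infty$, perform the change of variables $x=\varphi(y)$ to obtain $\int_I|g_n'(y)-G(y)|^q|\varphi'(y)|^{1-q}\,dy$, and then apply H\"older with the conjugate pair $\frac{r}{r-1+q}$, $\frac{r}{1-q}$. The paper's argument is line-for-line the same computation; your additional remark on justifying the change of variables for $C^1_+$ maps is a reasonable technical caveat that the paper leaves implicit.
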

	\begin{proof} First notice that $q<\frac{qr}{r-1+q}<1$.
		Consider a sequence $(g_n)_n$ of $C^1$ homeomorphisms of $I$ converging uniformly to $g$ and such that  $(g_n')_n$ converges to $G$ in $L^q$. Then the sequence $(g_n\circ \varphi^{-1})_n$ converges uniformly to $h$ as
		\begin{equation*}
			\|g_n\circ \varphi^{-1}-h\|_\infty=\sup_{x\in I}|g_n((g^{-1}(h(x)))-g(g^{-1}(h(x)))|
			=\|g_n-g\|_\infty.
		\end{equation*}
		On the other hand,
		\begin{align*}
			\|(g_n\circ \varphi^{-1})'-H\|_q^q & =\int_I\big|g_n'(\varphi^{-1}(x))\big(\varphi^{-1}\big)'(x)-H(x)\big|^qdx\\
			& =\int_I\big|g_n'(y)\big(\varphi'(y)\big)^{-1}-H(\varphi(y))\big|^q |\varphi'(y)|\,dy\\
			& =\int_I\big|g_n'(y)-G(y)\big|^q \big|\varphi'(y)\big|^{1-q}\,dy.
		\end{align*}
		
		Using H\"older's inequality with the conjugate exponents $\frac{r}{r-1+q}$ and $\frac{r}{1-q}$, we obtain
		\begin{align*}
			\|(g_n\circ \varphi^{-1})'-H\|_q^q & \leq\||\varphi'|^{1-q}\|_{\frac{r}{1-q}}\,\|\big|g_n'-G\big|^q\|_{\frac{r}{r-1+q}}
		\end{align*}
		or, equivalently,
		\begin{align*}
		\|(g_n\circ \varphi^{-1})'-H\|_q^q & \leq\|\varphi'\|_r^{1-q}\,\|g_n'-G\|_{\frac{qr}{r-1+q}}^q,
		\end{align*}
		completing the proof that $(h,H)\in\mathcal{M}^{1,q}(I)$.
	\end{proof}

Applying this result when $f$ or $g$ are the identity and recalling Proposition \ref{l1} we can now prove Theorem \ref{TheoremA}.

\begin{proof} (of Theorem \ref{TheoremA})
 Let $(f,F)\in \mpi$. Using the previous proposition, with  $q=\frac{p(r-1)}{r-p}$, $g=f$ and $h=I_d$, noticing that $0<q<p$ and $\frac{qr}{r-1+q}=p$, we obtain $\big(I_d,(F\circ f^{-1})\cdot (f^{-1})'\big)\in\mathcal{M}^{1,q}(I)$. By Proposition \ref{l1},
 \begin{equation*}
  0\leq (F\circ f^{-1})\cdot (f^{-1})'\leq 1,
 \end{equation*}
 and then, for $x\in I$, $ 0\leq (F\circ f^{-1})(f(x))\cdot (f^{-1})'(f(x))\leq 1,$ or, equivalently  $0\leq \frac{F(x)}{f'(x)}\leq 1$.

 Suppose now that  $0\leq \frac{F}{f'}\leq 1$. Working like above we conclude that $ 0\leq (F\circ f^{-1})\cdot (f^{-1})'\leq 1$ and then, by Proposition \ref{l1}, 
 \begin{equation*}
 \big(I_d,(F\circ f^{-1})\cdot (f^{-1})'\big)\in \mathcal{M}^{1,\frac{pr}{r-1+p}}.
 \end{equation*}
 
 Using the last proposition again, for $g=I_d$, $h=f$ and $q=p$, we obtain 
  \begin{equation*}
 	\Big(f,\big[(F\circ f^{-1})\cdot (f^{-1})'\big]\circ f)\cdot f'\Big)\in \mathcal{M}^{1,p}
 \end{equation*}
or, equivalently, $(f,F)\in\mpi$.
\end{proof}

\section{Some technical results}\label{technical} 
 We start by proving some auxiliary results  in order to prove the main result of the section, Proposition \ref{peetrerotacao2}.
\begin{lemma}\label{rotacao}
	Let $d\geq 2$, $1\leq m\leq \frac{d}{2}$,  $h=(h_1,\ldots,h_m):\R\rightarrow\R^s$  a $C^k$ function ($k\in\N\cup\{\infty\}$) and  $\alpha=(\alpha_1,\ldots,\alpha_m): \R^d \longrightarrow  \R^s$ defined by $\alpha(x)=h(x_1^2+\cdots+x_d^2)$.
	
	Then the function $F^h=(F_1^h,\ldots,F_d^h): \R^d \rightarrow \R^d$ such that
	\begin{equation*}
		F_t^h(x) = \left\{\begin{array}{ll}
			x_{2j-1}\cos(\alpha_j(x))-x_{2j}\sin(\alpha_j(x)), &  \text{if $t=2j-1$ and $j\leq m$}\\[2mm]
			x_{2j-1}\sin(\alpha_j(x))+x_{2j}\cos(\alpha_j(x)), &  \text{if $t=2j$ and $j\leq m$}\\[2mm]
			x_t, & \text{if $t>2m$}
		\end{array}\right.
	\end{equation*}
	is a volume and orientation preserving diffeomorphism, with the same differential regularity as $h$.
\end{lemma}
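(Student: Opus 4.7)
The plan is to decompose $F^h$ as a composition of simpler ``single-plane'' rotations and to verify each claimed property on the factors. For each $j \in \{1, \ldots, m\}$, let $G_j : \R^d \to \R^d$ be the map that rotates $(x_{2j-1}, x_{2j})$ by the angle $h_j(|x|^2)$ and fixes every other coordinate. Since a planar rotation preserves $x_{2j-1}^2 + x_{2j}^2$, each $G_j$ preserves the Euclidean norm of $x$, so $\alpha_j(G_k(x)) = h_j(|G_k(x)|^2) = \alpha_j(x)$ for all $j, k$. This invariance lets me write $F^h = G_1 \circ G_2 \circ \cdots \circ G_m$, since in the successive application the intermediate points keep the same $\alpha_j$'s and each factor then rotates by exactly $\alpha_j(x)$.

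Each $G_j$ inherits the regularity of $h_j$ from its explicit formula, and its inverse is the same type of map with $-h_j$ in place of $h_j$ (well-defined precisely because the norm, and thus the angle, is preserved). What remains is to show $\det DG_j = 1$; composition then gives that $F^h$ is an orientation- and volume-preserving diffeomorphism with the same regularity as $h$. Writing $\theta = h_j(|x|^2)$ and $\partial \theta / \partial x_k = 2\, h_j'(|x|^2)\, x_k$, a direct differentiation of the defining formulas yields
\[
	DG_j = A + u\, v^T,
\]
where $A$ is the block-diagonal matrix with the planar rotation $R(\theta)$ in rows and columns $2j-1, 2j$ and identity blocks elsewhere (so $\det A = 1$), and $uv^T$ is a rank-one correction with
\[
	u = \bigl(0, \ldots, 0,\, -G_j(x)_{2j},\; G_j(x)_{2j-1},\, 0, \ldots, 0\bigr)^T, \qquad v = 2\, h_j'(|x|^2)\, x.
\]
The matrix determinant lemma gives $\det DG_j = 1 + v^T A^{-1} u$, and since $A^{-1}$ restricts to $R(-\theta)$ on the $(2j-1, 2j)$-plane and sends $(-G_j(x)_{2j}, G_j(x)_{2j-1})^T$ back to $(-x_{2j}, x_{2j-1})^T$, we obtain $v^T A^{-1} u = 2\, h_j'(|x|^2)\bigl(-x_{2j-1} x_{2j} + x_{2j} x_{2j-1}\bigr) = 0$.

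The main care needed is in this Jacobian computation: one has to identify the correct rank-one decomposition and notice that $v$ is radial while $A^{-1} u$ is tangential to the sphere $|x| = $ const, which is why $v^T A^{-1} u$ vanishes. A more conceptual alternative is to pass to polar coordinates in each $(x_{2j-1}, x_{2j})$-plane: there $G_j$ becomes a pure translation of the angular variable by a function of $|x|^2$, and so manifestly preserves Lebesgue measure. The direct argument above is short enough and generalises the classical fact that block rotations of $\R^d$ with constant angles have determinant one to the present setting, where the angles vary radially.
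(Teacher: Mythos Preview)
Your proof is correct, but it follows a genuinely different route from the paper's. The paper does not factor $F^h$ into single-plane maps; instead it attacks $\det DF^h$ head-on. It writes the $k$-th column of $DF^h$ as $A_k+B_k$, where the $A_k$ assemble into the block-rotation matrix and each $B_k$ collects the terms coming from differentiating the angles $\alpha_j$. The key structural observation there is that all the $B_k$ are scalar multiples of one and the same column vector (because $\partial_k\alpha_j = 2x_k\,h_j'$ carries the common factor $x_k$), so in the multilinear expansion of $\det(A_1+B_1,\ldots,A_d+B_d)$ every term with two or more $B$-columns vanishes. One is left with $\det(A_1,\ldots,A_d)=1$ plus $d$ rank-one corrections, which the paper then shows cancel in pairs via an explicit $2\times 2$ block computation.

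Your decomposition $F^h=G_1\circ\cdots\circ G_m$ together with the matrix determinant lemma is cleaner and more conceptual: it isolates a single rank-one perturbation per factor, and the vanishing of $v^TA^{-1}u$ is transparent once you note that $A^{-1}u$ is tangential while $v$ is radial. The paper's direct computation, on the other hand, produces as a by-product explicit formulas $\partial_k F^h_i = a_{i,k}+b_{i,k}$ for all the partial derivatives of the full map, and these formulas are quoted verbatim in the proof of the next result (Proposition~\ref{peetrerotacao2}) to bound $|\partial G_i/\partial x_k|$. If you adopt your argument in the paper you would need to supply those bounds separately, which is easy but worth flagging.
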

\begin{proof}
	First notice that, for $j\leq m$, $	F_{2j-1}^h(x)^2+F_{2j}^h(x)^2=x_{2j-1}^2+x_{2j}^2$,
	and then
	\begin{equation*}
		F_1^h(x)^2+F_2^h(x)^2+\cdots +F_d^h(x)^2=x_1^2+x_2^2+\cdots+x_d^2.
	\end{equation*}
	
	From the last equality, it is easy to conclude that $F^h$ is a diffeomorphism whose inverse is $F^{-h}$. 
	
	To conclude the proof, we only need to show that $\det{JF(x)}=1$. Of course we can suppose that $2m=d$.
	
	Notice that the $k^{th}$ column of the $JF^h(x)$ is of the forma $A_k+B_k$, where $A_k=(a_{i,k})_{i=1,\ldots,d}$ and $B_k=(b_{i,k})_{i=1,\ldots,d}$, with
	\begin{equation*}
		a_{i,k}=\left\{\begin{array}{rll}
			\cos(\alpha_j(x)) & \text{if $i=2j-1$} & \text{ and $k=i$}\\
			-\sin(\alpha_j(x)) & \text{if $i=2j-1$} & \text{ and $k=i+1$}\\[2mm]
			\sin(\alpha_j(x)) & \text{if $i=2j$} & \text{ and $k=i-i$}\\
			\cos(\alpha_j(x)) & \text{if $i=2j$} & \text{ and $k=i$}\\[2mm]
			0 & \text{otherwise}, &
		\end{array}\right.		
	\end{equation*}

	\begin{equation*}
		b_{i,k}=\left\{\begin{array}{rl}
			\Big[-2x_ix_k\sin(\alpha_j(x))-2x_{i+1}x_k\cos(\alpha_j(x))\Big]h_j'(\|x\|_2^2) & \text{if $i=2j-1$}\\[2mm]
			\Big[2x_{i-1}x_k\cos(\alpha_j(x))-2x_ix_k\sin(\alpha_j(x))\Big]h_j'(\|x\|_2^2) & \text{if $i=2j$.}
		\end{array}\right.		
	\end{equation*}
	
	It is clear that
	\begin{equation}\label{elementar}
		\forall k_1,k_2=1,\ldots d\quad x_{k_2}B_{k_1}=x_{k_1}B_{k_2}.
	\end{equation}
	
	Using the multilinear property of the determinant,
	\begin{align*}
		\det{JF^h(x)}=&\det{\left(A_1+B_1\quad A_2+B_2\quad \ldots\quad A_{d-1}+B_{d-1}\quad A_d+B_d\right)}
	\end{align*}
	is equal to 
	\begin{equation*}
		\sum_{I\subseteq\{1,\ldots,d\}}\det{\left(C_1\ C_2\ \ldots\ C_d\right)},\ \text{where $C_k=A_k$, if $k\in I$ and $C_k=B_k$, otherwise.}
	\end{equation*}
	
	Using property \eqref{elementar}, the summation above is equal to 
	\begin{align*}
		\det{\left(A_1\ A_2\ \ldots\ A_d\right)}+\\
		\det{\left(B_1\ A_2\ \ldots\ A_d\right)}+\det{\left(A_1\ B_2\ A_3\ldots\ A_d\right)}+\\
		\det{\left(A_1\ A_2\ B_3\ A_4 \ldots\ A_d\right)}+\det{\left(A_1\ A_2\ A_3\ B_4\ A_5\ \ldots\ A_d\right)}+\\
		+\cdots+\\
		\det{\left(A_1\  \ldots\ A_{d-2}\ B_{d-1}\ A_d\right)}+\det{\left(A_1\ A_2\ \ldots\ A_{d-1}\ B_d\right)}.
	\end{align*}

	The first matrix in this sum is the block matrix 
	\begin{equation*}
		\left( \begin{array}{cccc}
			X_1 & 0 & \ldots & 0  \\
			0 & X_2 & \dots & 0  \\
			\vdots & \vdots & \ddots & \vdots\\
			0 & 0 & \dots & X_m
		\end{array} \right),\quad\text{with}\quad 	X_t=\left( \begin{array}{ll}
			a_{2t-1,2t-1} & a_{2t-1,2t}\\ 
			a_{2t,2t-1} & a_{2t,2t}
		\end{array} \right),
	\end{equation*}
	whose determinant is equal to $1$. In what concern to the others matrices, all them are of the form
	\begin{equation*}
		\left( \begin{array}{cccc||lccc}
			X_1 & 0 & \ldots & 0& Y_1 & 0 &\ldots & 0  \\
			0 & X_2 & \dots & 0 & Y_2 & 0  & \dots & 0  \\
			\vdots & \vdots & \ddots & \vdots & \vdots  & \vdots & \ddots & \vdots \\
			0 & 0 & \dots & X_{j-1} & 0 & 0  & \dots & 0  \\[1mm]
			\hline\hline
			&&&&&&&\\[-2mm]
			0 & 0 & \ldots & 0 & Y_{j}  & 0 & \ldots & 0 \\
			0 & 0 & \ldots &0 & Y_{j+1}  & X_{j+1} & \ldots &  0 \\
			\vdots& \vdots & \ldots & \vdots & \vdots  & \vdots & \ddots &0  \\
			0 & 0 & \ldots & 0 &  Y_s  & 0 & \ldots & X_m \\
		\end{array} \right),
	\end{equation*}
	where
	\begin{equation*}
		Y_t=\left\{\begin{array}{l}
			Y_t^1=\left( \begin{array}{rr}
				a_{2t-1,2t-1} & b_{2t-1,2t}\\ 
				a_{2t,2t-1} & b_{2t,2t}
			\end{array} \right)\\
			\text{or}\\
			Y_t^2=\left( \begin{array}{ll}
				b_{2t-1,2t-1} & a_{2t-1,2t}\\ 
				b_{2t,2t-1} & a_{2t,2t}
			\end{array} \right).
		\end{array}\right.
	\end{equation*}
	
	The determinants of those matrices are
	\begin{equation*}
		\det{	\left( \begin{array}{cccc}
				X_1 & 0 & \ldots & 0  \\
				0 & X_2 & \dots & 0  \\
				\vdots & \vdots & \ddots & \vdots\\
				0 & 0 & \dots & X_m
			\end{array} \right)}\times \det{
			\left( \begin{array}{lccc}
				Y_{j}  & 0 & \ldots & 0 \\
				Y_{j+1}  & X_{j+1} & \ldots &  0 \\
				\vdots & \vdots & \ddots & 0\\
				Y_s  & 0 & \ldots &X_m \\
			\end{array} \right)=\det{Y_j.}
		}
	\end{equation*}
	
	Then 
	\begin{align*}
		\det{JF^h(x)}=1+\sum_{j=1}^s\left(\det{Y_j^1}+\det{Y_j^2}\right)
	\end{align*}
	and the conclusion follows, as one can see that $\det{Y_j^1}+\det{Y_j^2}=0$.
\end{proof}

We will denote by $J$, a ``jump function'' that establishes a $C^\infty$ transition from $1$ to $0$, that is, $J:\R\rightarrow \R$ equal to $1$ in $]-\infty,0]$, strictly decreasing in $[0,1[$ and equal to $0$ in $[1,+\infty[$.

Of course all the derivative of $J$ are bounded.
An explicit example of such a function $J$ is defined in $(0,1)$ by 
$$J(t)=\frac{ \,e^{\frac{1}{t-1}}}{e^{\frac{1}{t-1}}+e^{-\frac{1}{t}}}.$$

As a curiosity, one can verify that $\max_\R |J'|=|J'(\frac{1}{2})|=2$.

\begin{proposition}\label{peetrerotacao2}
	Let $d\geq 2$, $a\in\R^d$ and $H\in SO(d)$. Then, for $r>s>0$ there exists an orientation and volume preserving $C^{\infty}$ diffeomorphism $G:\R^d\rightarrow\R^d$, such that
	\begin{eqnarray*}
		\forall x\in \R^d\quad G(x)=\left\{\begin{array}{ll}
			x & \text{if $\|
				x-a\|_2\geq r$}\\
			H(x-a)+a & \text{if $\|x-a\|_2\leq s$.}\end{array}\right.
	\end{eqnarray*}

	Moreover, there exists $C_1$, not depending on $r$ and $s$, such that
	\begin{equation*}
		\|DG-H\|_{L^p(B(a,r))}\leq C_1\,r^\frac{d}{p}\left(1-\frac{s}{r}\right)^{\frac{1-p}{p}}.
	\end{equation*}	 
\end{proposition}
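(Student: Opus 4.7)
My plan is to reduce to Lemma \ref{rotacao} by decomposing $H$ into a block diagonal rotation and then choosing the ``angle function'' $h$ so that each block rotates by the right angle inside the inner ball and by zero outside the outer ball.

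First I would reduce to $a=0$ by translation (which is volume-preserving with trivial differential) and to the case where $H$ is block diagonal. By the real normal form for special orthogonal matrices, there exist $P\in SO(d)$, $m\leq d/2$ and angles $\theta_1,\ldots,\theta_m\in[0,2\pi)$ such that $H=P^T D P$, where $D$ acts on each two-plane $\mathrm{span}(e_{2j-1},e_{2j})$ (for $j\leq m$) as a rotation by $\theta_j$ and as the identity on the remaining coordinates (if $d$ is odd, one trailing coordinate is left fixed). Since $P\in SO(d)$ is an isometry with $|\det P|=1$ and $\|Px\|_2=\|x\|_2$, if I can build $\tilde G$ solving the statement for $D$ (with $a=0$), then $G(x)=P^T\tilde G(P(x-a))+a$ solves it for $H$ and $a$, with the same pointwise bound on the partial derivatives (up to a dimensional constant coming from the orthogonal change of basis).

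Next, for the block diagonal case, I would apply Lemma \ref{rotacao} with $h=(h_1,\ldots,h_m):\R\to\R^m$ defined by
\[
h_j(t)=\theta_j\,J\!\left(\frac{t-s^2}{r^2-s^2}\right),
\]
where $J$ is the jump function introduced just before the statement. Then $h_j$ is $C^\infty$, equals $\theta_j$ on $(-\infty,s^2]$ and vanishes on $[r^2,+\infty)$. Setting $\alpha_j(x)=h_j(\|x\|_2^2)$ and $\tilde G=F^h$ as in the lemma, we get a $C^\infty$ orientation- and volume-preserving diffeomorphism of $\R^d$; inside $\|x\|_2\leq s$ each $\alpha_j(x)=\theta_j$ so $\tilde G(x)=Dx$, and outside $\|x\|_2\geq r$ every $\alpha_j$ vanishes, making $\tilde G$ the identity.

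For the derivative bound, I would differentiate the formula for $F_t^h$: a typical partial $\dfx{F_{2j-1}^h}{x_k}$ is a sum of a Kronecker-delta term bounded by $1$ in absolute value (from the explicit $\cos\alpha_j,\sin\alpha_j$) and a term of the form $(\pm x_{2j-1}\sin\alpha_j\pm x_{2j}\cos\alpha_j)\,\dfx{\alpha_j}{x_k}$. Since $\dfx{\alpha_j}{x_k}(x)=2x_k\,h_j'(\|x\|_2^2)$ is supported on $s\leq\|x\|_2\leq r$, on this annulus I have $|x_k|,|x_{2j-1}|,|x_{2j}|\leq r$ and
\[
|h_j'(t)|\leq |\theta_j|\,\|J'\|_\infty\,\frac{1}{r^2-s^2},
\]
so the second term is bounded by $2\pi\|J'\|_\infty\,\dfrac{r^2}{r^2-s^2}=\dfrac{2\pi\|J'\|_\infty}{(1-s/r)(1+s/r)}$. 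Adding the $1$ from the first term and absorbing into a constant $C$ independent of $r,s$, I obtain $\left|\dfx{G_i}{x_j}(x)\right|\leq C/(1-s/r)$ as required. The only subtle point, which I would flag, is verifying that conjugation by $P$ preserves this bound up to a dimensional constant; this follows from the chain rule since $\|P\|_{\mathrm{op}}=1$ and each entry of $JG(x)$ is a bilinear combination (with coefficients of modulus $\leq 1$) of the entries of $J\tilde G(P(x-a))$.
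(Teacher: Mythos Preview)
Your proof is correct and follows essentially the same route as the paper: reduce to $a=0$ and block-diagonal $H$ via conjugation by $P\in SO(d)$, then apply Lemma~\ref{rotacao} with $h_j(t)=\theta_j\,J\big(\tfrac{t-s^2}{r^2-s^2}\big)$, and finally bound the partials by splitting each entry of $JF^h$ into the $a_{i,k}$ piece (your ``Kronecker-delta term'', $\leq 1$) and the $b_{i,k}$ piece, estimated via $|h_j'|\leq |\theta_j|\,\|J'\|_\infty/(r^2-s^2)$ on the annulus to yield $O\!\big(r^2/(r^2-s^2)\big)=O\!\big(1/(1-s/r)\big)$. The paper is terser about the conjugation (``it is clear that we can consider\ldots'') and quotes the $a_{i,k},b_{i,k}$ notation from the proof of Lemma~\ref{rotacao} directly, but the argument is the same; your explicit treatment of the chain rule under $P$ is a welcome clarification rather than a different idea.
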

\begin{proof} It is clear that we can consider $a=(0,\ldots,0)$ and that $H$ is a block diagonal matrix of the form
	\begin{equation*}
		H=\left(\begin{array}{cccc}
			H_1 & 0 & \cdots & 0\\
			0 & H_2 & \cdots & 0\\
			\vdots & \vdots & \ddots & \vdots \\
			0 & 0 & \cdots & H_k
		\end{array}\right),
	\end{equation*}
	where, for some $m\leq k$ and all $i\leq m$, there exists $\theta_i\in[0,2\pi[$ such that
	\begin{equation*}
		H_i=\left(\begin{array}{rc}
			\cos \theta_i & \sin\theta_i\\
			-\sin\theta_i & \cos\theta_i
		\end{array}\right)
	\end{equation*}
	and, for $i>m$, $H_i=(1)$.
	
	Consider $G=F^h$, given by the previous lemma, for $h=\left(h_{\theta_1},\ldots,h_{\theta_m}\right)$, where $h_{\theta_i}(t)=\theta_i\,J(\frac{t-s^2}{r^2-s^2})$.
	
	If $\|x\|_2\geq r$, then
	$\alpha(x)=h(\|x\|_2^2)=(0,\ldots,0)$, from where we obtain $G(x)=x$. On the other hand, if $\|x\|_2\leq s$, then 
	$\alpha(x)=h(\|x\|_2^2)=(\theta_1,\ldots,\theta_m)$, that is, $G(x)=H(x)$.

	For the second part, using notation introduced in the proof of Lemma \ref{rotacao}, one can see that, if $\theta=\max_i\theta_i$, 
	\begin{equation*}
		\left|\frac{\partial G_i}{\partial x_k}\right|=|a_{i,k}+b_{i,k}|\leq 1+4\theta\|J'\|_\infty\frac{r^2}{r^2-s^2}\leq (1+4\theta\|J'\|_\infty)\frac{r^2}{r^2-s^2}.
	\end{equation*}

	Notice that, as $\frac{r^2}{r^2-s^2}=\frac{r}{r+s}\cdot\frac{r}{r-s}\leq \frac{1}{2}\frac{1}{1-\frac{s}{r}}$ there exists  $C$, independent of $r$ and $s$ such that, if $i,j=1,\ldots,d$ and $w_d$ is the volume of a unitary $d$-ball, then, using the previous lemma,
	\begin{align*}
		\left	\|\frac{\partial G_i}{\partial x_j}-H_{ij}\right\|_{L^p(B(a,r))}^p & =  \left	\|\frac{\partial G_i}{\partial x_j}-H_{ij}\right\|_{L^p(B(a,r)\setminus B(a,s))}^p\\
		& \leq\left	\|\frac{\partial G_i}{\partial x_j}\right\|_{L^p(B(a,r)\setminus B(a,s))}^p+\left	\|H_{ij}\right\|_{L^p(B(a,r)\setminus B(a,s))}^p\\
		& \leq w_d(r^d-s^d)\left[C^p(1-\tfrac{s}{r})^{-p}+|H_{ij}|^p\right]\\
		& \leq w_d(r^d-s^d)(1-\tfrac{s}{r})^{-p}\left[C^p+|H_{ij}|^p\right]
	\end{align*}
	and the conclusion follows, as $r^d-s^d\leq (r-s)\,d\,r^{d-1}=d(1-\frac{s}{r})r^d$.
\end{proof}

The following result is particular case of the Vitali's covering theorem (see, for example, \cite[Corollary 2, p. 28]{Evans_Gariepy1992}).

\begin{lemma}\label{vitali}
	Let $\Omega$ be an open set of $\R^d$, $\eps>0$ and $\mathcal{V}$ be the family of all closed balls contained in $\Omega$ and of diameter at most $\eps$.
	
	Then there exists a countable disjoint subcollection of $\mathcal{V}$, $(D_n)_{n\in\N}$ such that 
	\begin{equation*}
		\lambda\left(\Omega\setminus \bigcup_{n\in\N}D_n\right)=0.
	\end{equation*}
	
	In particular, for $\delta>0$, there exists $N$ such that $\lambda\left(\Omega\setminus \bigcup_{k=1}^ND_k\right)\leq \delta$.
\end{lemma}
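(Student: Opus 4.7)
The plan is to reduce the statement to the classical Vitali covering theorem, after verifying its hypothesis. First, I would check that $\mathcal{V}$ is a \emph{Vitali cover} of $\Omega$ in the usual sense, i.e.\ every $x \in \Omega$ lies in balls of $\mathcal{V}$ of arbitrarily small diameter. Indeed, by openness of $\Omega$ there exists $r_0 > 0$ with $\overline{B}(x, r_0) \subseteq \Omega$, and for every $0 < \eta \leq \min\{r_0, \eps/2\}$ the closed ball $\overline{B}(x, \eta)$ belongs to $\mathcal{V}$ and has diameter $2\eta \leq \eps$, so it is a witness to the Vitali property at $x$.

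With this hypothesis in hand, the first assertion is immediate from the classical Vitali covering theorem for Lebesgue measure on $\R^d$: from any Vitali cover of a set one can extract a countable pairwise disjoint subfamily whose union covers the set up to a null set. Applied to $\mathcal{V}$ and $\Omega$, this produces the desired sequence $(D_n)_{n \in \N}$ with $\lambda(\Omega \setminus \bigcup_n D_n) = 0$.

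For the quantitative refinement, the balls $D_n \subseteq \Omega$ are pairwise disjoint, so countable additivity gives
\[
\sum_{n=1}^\infty \lambda(D_n) = \lambda\Bigl(\bigcup_{n \in \N} D_n\Bigr) \leq \lambda(\Omega) < \infty,
\]
finiteness being guaranteed because $\Omega$ is bounded in all applications of this lemma. The convergence of the series forces its tail below $\delta$ for some $N$, and combined with the null-set conclusion above this yields $\lambda\bigl(\Omega \setminus \bigcup_{k=1}^N D_k\bigr) \leq \delta$.

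I do not anticipate any substantive obstacle, since the content of the lemma is a classical theorem repackaged for the applications in Section \ref{dgeq2}; the only matters of care are the diameter constraint (harmless, as the Vitali property only requires arbitrarily small selections at each point) and the finiteness needed for the tail bound (provided by the boundedness of $\Omega$). If a self-contained argument were preferred to the direct citation, one would execute the standard greedy Vitali selection: exhaust $\Omega$ by an increasing sequence of compacts, at each step pick a ball from $\mathcal{V}$ disjoint from the previously chosen ones with radius within a factor $\tfrac{1}{2}$ of the supremum of available radii, and invoke the $5r$-covering lemma to show that the residual set has measure zero.
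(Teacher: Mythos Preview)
Your proposal is correct and matches the paper's approach exactly: the paper states this lemma as ``a particular case of the Vitali's covering theorem'' and offers no proof, so your verification that $\mathcal{V}$ is a Vitali cover followed by a citation of the classical theorem is precisely what is intended. Your observation that the ``in particular'' clause tacitly requires $\lambda(\Omega)<\infty$ (supplied by boundedness in every application in Section~\ref{dgeq2}) is a useful and accurate caveat.
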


\section{The proof of Theorem \ref{TheoremB}}\label{dgeq2}

Analogously to Section \ref{d=1}, we will study pairs $(f,F)$ for a similar space to what we considered before, but now in higher dimensions and with the additional restriction of working with volume preserving homeomorphisms.

\begin{definition}
	If $\Omega$ is a bounded open subset of $\R^d$ and $0<p<1$, we denote by $\mlp{\Omega}$ the	subset of $L^p(\Omega)\times L^p(\Omega)^{d^2}$ whose elements are pairs $(f,F)$ admitting a sequence $(f_n)_n$ of  volume preserving $C^1$ homeomorphisms  of $\Omega$ continuous to the boundary, converging uniformly to $f$ and such that $(Df_n)_n$ converge to $F$ in $L^p(\Omega)^{d^2}$.		
\end{definition}

Notice that, if $(f,F)\in\mlp{\Omega}$ then $f$ preserves the volume as it is the uniform limit of preserving volume functions. Besides that, viewing $F$ as a matrix $d\times d$, the  determinant of $F$ is, almost everywhere, equal to  $\pm 1$. This is a consequence of the fact that the $L^p$ convergence implies almost everywhere convergence.

\begin{example}
	Consider $\Omega=(0,1)^d$ and  $(Id,0)\in L^p(\Omega)\times L^p(\Omega)^{d^2}$, where $0$ is the null matrix $d\times d$. By the considerations above, $(I_d,0)\not\in\mlp{\Omega}$, although $I_d$ is a $C^\infty$ homeomorphism that preserves the volume and there exists a sequence $(F_n)_n$ of $C^\infty$ homeomorphisms converging uniformly to $I_d$ and such that $(F_n')_n$ converges to $0$ in $L^p((0,1)^d)^{d^2}$. To see this, consider the sequence $(F_n)_n$ defined by 
	\begin{equation*}
		\begin{array}{rclc}
			F_n:&	[0,1]^d & \longrightarrow & [0,1]^d\\
			& 	(x_1,\ldots,x_d) & \longmapsto & \big(f_n(x_1),\ldots,f_n(x_d)\big),
		\end{array}
	\end{equation*}
	where $f_n$ are $C^\infty$ homeomorphisms of $(0,1)$, as defined in Example \ref{peetre2}.
\end{example}
\begin{remark}
	Using arguments as in Lemma \ref{0h1}, one can see that, if $(I_d,H)\in\mlp{\Omega}$, then $H\in L^\infty(\Omega)^d$. In fact, each entry of $H$ has modulus bounded by $1$.
\end{remark}

As a step towards proving the main result of this section, we will first do it for a particular case with simpler functions that will afterwards be able to approximate a general function.

\begin{proposition}\label{quaseconstante}
	Let $d\geq 2$, $0<p<1$, $\Omega$ a bounded open subset of $\R^d$ and suppose that $\Omega$ except a set of Lebesgue measure zero is the union of disjoint open sets $\Omega_1,\ldots,\Omega_N$.
	
	If  $H:\Omega\longrightarrow SO(d)$ is a function that is constant in each $\Omega_i$, then $(I_d,H)\in\mlp{\Omega}$.
\end{proposition}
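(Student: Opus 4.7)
The plan is to build $f_n$ as a patchwork of many local rotations: inside each $\Omega_i$ I place many small disjoint balls, on each ball glue in the local rotation by $H_i := H|_{\Omega_i}$ produced by Proposition \ref{peetrerotacao2}, and take $f_n$ equal to the identity off the balls.

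Concretely, fix $n$. For each $i=1,\ldots,N$ I apply Lemma \ref{vitali} inside $\Omega_i$ to obtain finitely many pairwise disjoint closed balls $\overline{B(a_{i,k},r_{i,k})}\subset \Omega_i$, $k=1,\ldots,N_i$, of diameter at most $1/n$, with
\begin{equation*}
\lambda\Big(\Omega_i\setminus \textstyle\bigcup_{k}B(a_{i,k},r_{i,k})\Big)\leq \frac{1}{Nn}.
\end{equation*}
On each such ball, Proposition \ref{peetrerotacao2} applied with $a=a_{i,k}$, $H=H_i$, $r=r_{i,k}$ and $s_{i,k}=(1-1/n)\,r_{i,k}$ yields a volume- and orientation-preserving $C^\infty$ diffeomorphism $G_{i,k}$ of $\R^d$ that equals $H_i(x-a_{i,k})+a_{i,k}$ on $B(a_{i,k},s_{i,k})$ and the identity off $B(a_{i,k},r_{i,k})$. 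Since the closed balls are pairwise disjoint and each $G_{i,k}$ agrees with the identity on $\partial B(a_{i,k},r_{i,k})$ (so by bijectivity $G_{i,k}$ maps $\overline{B(a_{i,k},r_{i,k})}$ onto itself), I set
\begin{equation*}
f_n(x)=\begin{cases}G_{i,k}(x)&\text{if } x\in \overline{B(a_{i,k},r_{i,k})},\\ x&\text{otherwise.}\end{cases}
\end{equation*}
This defines a $C^\infty$ volume- and orientation-preserving diffeomorphism of $\Omega$ that agrees with the identity in a neighborhood of $\partial\Omega$, hence is continuous to the boundary.

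Two verifications remain. Uniform convergence is immediate: on each ball both $x$ and $f_n(x)$ lie in $B(a_{i,k},r_{i,k})$, so $\|f_n-I_d\|_\infty\leq 1/n$. For the $L^p$ estimate on the differentials, fix entries $i_0,j_0$ and split the integral over $\Omega$ along the balls and the complement. On each ball, Corollary \ref{rem} applied entry-wise gives
\begin{equation*}
\|(DG_{i,k})_{i_0j_0}-(H_i)_{i_0j_0}\|_{L^p(B(a_{i,k},r_{i,k}))}^{p}\leq C_1^p\, r_{i,k}^{\,d}\,(1/n)^{1-p},
\end{equation*}
and since $\sum_{i,k} r_{i,k}^{\,d}\leq w_d^{-1}\lambda(\Omega)$ the total contribution is $\leq C_1^p w_d^{-1}\lambda(\Omega)(1/n)^{1-p}$, which tends to $0$ because $p<1$. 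On the complement $\Omega\setminus\bigcup B_{i,k}$ we have $Df_n=I_d$ and each entry of $H$ is bounded by $1$, so the integrand is at most $2^p$ while the measure is at most $1/n$. Summing both pieces shows $\|Df_n-H\|_p\to 0$, and hence $(I_d,H)\in\mlp{\Omega}$.

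The delicate point is balancing the two limiting requirements against each other: uniform convergence forces the ball diameters to shrink (hence many balls), while Corollary \ref{rem} forces the ratio $s/r$ to approach $1$, otherwise the $L^p$ norm of the local rotation blows up like $(1-s/r)^{(1-p)/p}$. The two demands are, however, controlled by independent parameters (diameter and deficit $1-s/r$) and can be tuned simultaneously to $1/n$; everything else is bookkeeping, resting on the entry-wise estimate already packaged in Corollary \ref{rem}.
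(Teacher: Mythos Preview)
Your proof is correct and follows essentially the same route as the paper: Vitali-cover each $\Omega_i$ by small disjoint closed balls, replace the identity on each ball by the local rotation furnished by Proposition~\ref{peetrerotacao2}, and control the $L^p$ defect via Corollary~\ref{rem} on the balls and a crude bound on the small-measure complement. The only cosmetic difference is that the paper first treats the case $N=1$ with abstract parameters $\varepsilon,\delta,s_k$ and then reduces the general case to it, whereas you handle all $\Omega_i$ at once with the explicit choices $1/n$ for diameter, deficit $1-s/r$, and complement measure.
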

\begin{proof}  Suppose first that $N=1$ and $\Omega=\Omega_1$.
	We will define, given $\eps>0$, a  volume preserving $C^\infty$ diffeomorphisms of $\Omega$, $f$, such that $\|f-Id\|_\infty\leq \eps$ and $\|Df-H\|_p\leq \eps$.
	
	For $\delta>0$, to be defined, consider, using Lemma \ref{vitali}, $n\in\N$, $a_1,\ldots,a_n\in \Omega$, $0<r_1,\ldots,r_n<\frac{\eps}{2}$ such that $ \left(D(a_k,r_k)\right)_{k=1}^n$ is a disjoint family of closed balls of $\Omega$ and   $\lambda\left(\Omega\setminus \cup_{k=1}^n D(a_k, r_k)\right) < \delta$. 
	
	Let $f$ be equal to the identity outside 	$ \cup_{k=1}^n D(a_k, r_k)$ and, in $D_k(a_k,r_k)$  equal to the function $G=G(a_k,H,r_k,s_k)$ given by Proposition \ref{peetrerotacao2}, for $s_k$ to be defined. Notice that $f$ is $C^\infty$ as it is $C^\infty$ in the complementary of each closed disk (recall that the closed balls are disjoints).
	
	It is clear that $f$ is a  volume preserving $C^\infty$ diffeomorphisms of $\Omega$, such that $\|f-Id\|_\infty\leq \eps$. Relatively to the control of the derivatives, we have, using the second part of Proposition \ref{peetrerotacao2} and Lemma \ref{vitali}, and recalling that 
		$Df$ is the identity outside 	$ \cup_{k=1}^n D(a_k, r_k)$,
	\begin{align*}
		\|Df-H\|_p^p & \leq \sum_{k=1}^n\|Df-H\|_{L^p(D(a_k,r_k))}^p+\|Df-H\|_{L^p(\Omega\setminus \bigcup_{k=1}^nD(a_k,r_k))}^p\\
		& \leq C_1^p\sum_{k=1}^nr_k^d\left(1-\frac{s_k}{r_k}\right)^{1-p}+\delta\,(1+\max_{i,j}|H_{i,j}|)^\frac{1}{p}
	\end{align*}
	and the conclusion follows, choosing $s_k$ close enough to $r_k$ and $\delta$ small.\vskip2mm
	
	If $N>1$, we only need to use the previous case, defining a function in each $\Omega_k$, for $k=1,\ldots,N$.
\end{proof}

\begin{proof}  (of Theorem \ref{TheoremB}) Fix a $d$-cube $K$ containing $\Omega$. For each $n\in\N$, let $\mathcal{P}_n$ be a dyadic partition of $K$ in $2^n$ $d$-cubes $K_1,\ldots,K_{2^n}$. We can suppose that, for a certain $M_n\leq 2^n$, only $K_1,\ldots,K_{M_n}$ intersects $\Omega$.  For $1\leq i\leq M_n$ choose a point $x_i$ in the interior of $K_i\cap \Omega$.
	
	Define $H_n$ such that $H_n(x)=H(x_i)$ if $x$ belongs to the interior of $K_i$, for some $i\leq M_n$. Then, by Proposition \ref{quaseconstante}, $(I_d,H_n)\in\mlp{\Omega}$ as, by definition, $H_n(x)\in SO(d)$ for almost all $x$ in $\Omega$.
	
	It is clear, as $H$ is Riemann integrable, that the sequence $\left(H_n\right)_{n\in\N}$ converge to $H$ in $L^1(\Omega)^{d^2}$.  This is true as, if $H^k$ and $H_n^k$ are the $k$-th component of $H$ and $H_n$, respectively, then
	\begin{align*}
		\int_\Omega\left|H^k(x)-H_n^k(x)\right|dx & =\sum_{i=1}^{M_n}\int_{K_i\cap\Omega}\left|H^k(x)-H_n^k(x)\right|dx\\
		&\leq \sum_{i=1}^{M_n}\left(\max_{K_i} H^k- \min_{K_i} H^k\right)\,\lambda(K_i)\\
		& =\sum_{i=1}^{M_n}\big(\max_{K_i} H^k\big)\,\lambda(K_i)- \sum_{i=1}^{M_n}\big(\min_{K_i}\,H^k\big)\,\lambda(K_i)
	\end{align*}		 
	and the conclusion follows as $H^k$ is Riemann integrable.
	
 For $n\in\N$, recalling that $(I_d,H_n)\in\mlp{\Omega}$, consider a $C^1$ homeomorphism $g_n$ such that $\|g_n-I_d\|_\infty\leq \frac{1}{n}$ and $\|Dg_n-H_n\|_p\leq \frac{1}{n}$. Then, by \eqref{desigualdadep} and \eqref{pqmenor}, we have 
	\begin{align*}
		\|Dg_n-H\|_p^p & \leq \|Dg_n-H_n\|_p^p+\|H_n-H\|_p^p  \leq \tfrac{1}{n^p}+\lambda(\Omega)^{1-p}\|H_n-H\|_1^p,
	\end{align*}
	proving that $(Dg_n)_n$ converges in $L^p(\Omega)^{d^2}$ to $H$, and then $(I_d,H)\in\mlp{\Omega}$.
\end{proof}

\begin{remark}
In this theorem, if we consider that $H$ is only Lebesgue integrable and approximate it, using simple functions approximating each component of $H$, we do not (in general) obtain functions in $SO(d)$ and so, we can not use Proposition \ref{quaseconstante}.
\end{remark}

Finally let us see that, like in the $d=1$ case, something similar to the above theorem happens if we replace the identity by another function.

\begin{corollary}
	Let $d\geq 2$, $0<p<1$, $\Omega$ a bounded  open subset of $\R^d$ and $f$ a volume preserving $C^1$ homeomorphism of $\Omega$, with $Df\in L^r(\Omega)^d$ for some $r>\frac{p}{1-p}$. Let $q=\frac{rp}{r-p}$, if $r<\infty$ and $q=p$, otherwise.
		
	Then if $(I_d,H)\in \mathcal{M}_\lambda^{1,q}{\Omega}$ then $\big(f,(H\circ f)\cdot Df\big)\in \mlp{\Omega}$.
	
	If, in addition, $Df\in SO(d)$, then $(I_d,H)\in\mlp{\Omega)}$ if and only if $(f,H\circ f)\in\mlp{\Omega)}$.
\end{corollary}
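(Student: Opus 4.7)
The plan is to adapt the one-dimensional change-of-variables argument used right before the proof of Theorem~\ref{l2}, replacing the composition $g_n\circ\varphi^{-1}$ by $g_n\circ f$.

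Assume $(I_d,H)\in\mathcal{M}_\lambda^{1,q}(\Omega)$ and pick a sequence $(g_n)_n$ of volume-preserving $C^1$ homeomorphisms of $\Omega$ (continuous to the boundary) with $g_n\to I_d$ uniformly and $Dg_n\to H$ in $L^q(\Omega)^{d^2}$. Set $f_n:=g_n\circ f$. Compositions of volume-preserving $C^1$ homeomorphisms are again volume-preserving $C^1$ homeomorphisms of $\Omega$, so each $f_n$ qualifies, and uniform convergence is immediate,
\[
\|f_n-f\|_\infty=\sup_{x\in\Omega}|g_n(f(x))-f(x)|=\|g_n-I_d\|_\infty\longrightarrow 0.
\]

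The core step is the $L^p$-convergence of the derivatives. The chain rule gives
\[
Df_n(x)-(H\circ f)(x)\,Df(x)=\bigl(Dg_n(f(x))-H(f(x))\bigr)\,Df(x),
\]
so every entry is a finite sum of products $A_{ik}(x)B_{kj}(x)$ with $A:=(Dg_n-H)\circ f$ and $B:=Df$. Combining the pointwise inequality $|A_{ik}B_{kj}|^p\le|A_{ik}|^p|B_{kj}|^p$ with the classical Hölder inequality for the conjugate exponents $q/p$ and $r/p$ (both at least one: $r>p/(1-p)$ forces $p<q$ and $p<r$, while $p/q+p/r=1$ by the definition of $q$) yields
\[
\|A_{ik}B_{kj}\|_p\le\|A_{ik}\|_q\,\|B_{kj}\|_r.
\]
Since $f$ is volume preserving, $|\det Df|=1$, and the change-of-variables formula turns $\|A_{ik}\|_q$ into $\|(Dg_n)_{ik}-H_{ik}\|_q\to 0$, while $\|B_{kj}\|_r$ is finite by hypothesis. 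In the case $r=\infty$, $q=p$, the estimate collapses to $\|A_{ik}B_{kj}\|_p\le \|B_{kj}\|_\infty\|A_{ik}\|_p$. Summing the $p$-th powers of the entries by means of \eqref{desigualdadep} we obtain $\|Df_n-(H\circ f)\,Df\|_p\to 0$, and hence $\bigl(f,(H\circ f)\cdot Df\bigr)\in\mathcal{M}^{1,p}_\lambda(\Omega)$.

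The delicate point to double-check is that Hölder still applies in the quasi-Banach regime $0<p<1$; once one raises to the $p$-th power the integrand is non-negative and the exponents $q/p,\,r/p$ are at least one, so no difficulty arises. For the ``if and only if'' under $Df\in SO(d)$, compactness of $SO(d)$ gives $Df\in L^\infty$ (so $r=\infty$ and $q=p$), and $f^{-1}$ is itself a volume-preserving $C^1$ homeomorphism of $\Omega$ with $Df^{-1}=(Df)^T\!\circ f^{-1}\in SO(d)$. The forward implication is the first part; the reverse follows from running exactly the same composition argument with $f^{-1}$ in the role of $f$, applied to a sequence approximating the pair on the $f$-side, the $L^\infty$-bound on $Df^{-1}$ closing the error estimates without any new trick.
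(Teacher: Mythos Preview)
Your argument is correct and follows essentially the same route as the paper: compose the approximating sequence with $f$, read off uniform convergence from $\|g_n\circ f-f\|_\infty=\|g_n-I_d\|_\infty$, expand the matrix entries of $Df_n-(H\circ f)\,Df$ via the chain rule, and control each product term by H\"older with the conjugate pair $q/p,\,r/p$ (equivalently the paper's $q/p,\,q/(q-p)$, since $pq/(q-p)=r$), using the volume-preserving change of variables to remove the inner composition with $f$. Your write-up is in fact slightly more complete than the paper's, which stops right after the H\"older step and does not spell out the $r=\infty$ case or the reverse implication under $Df\in SO(d)$; your observation that $f^{-1}$ is again a volume-preserving $C^1$ homeomorphism with $Df^{-1}\in SO(d)\subset L^\infty$, so that the same argument runs backwards with $q=p$, is exactly what is needed there.
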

\begin{proof} Notice that, $p<q<1$, if $r<\infty$.
	Let $(f_n)_n$ be the sequence converging to $(I_d,H)$ and $g_n=f_n\circ f$. Notice that
	\begin{equation*}
		\|g_n-f\|_\infty=\max_x|f_n(f(x))-f(x)|=\max_y|f_n(y)-y|=\|f_n-I_d\|_\infty.
	\end{equation*}
	
	On the other hand, if $H=\left(H_{i,j}\right)_{i,j=1,\ldots d}$, then, using \eqref{desigualdadep},
	\begin{align*}
\|Dg_n-	(H\circ f)\,Df\|_p^p & =\max_{i,j}\left\|\sum_{k=1}^d\Big(\Big(\dfx{(f_n)_i}{x_k}-H_{i,k}\Big)\circ f\Big)\,\dfx{(f)_k}{x_j}\right\|_p^p\\
&\leq \max_{i,j}\sum_{k=1}^d\left\|\Big(\Big(\dfx{(f_n)_i}{x_k}-H_{i,k}\Big)\circ f\Big)\,\dfx{(f)_k}{x_j}
\right\|_p^p.
	\end{align*}
	
	For $i,j=1,\ldots,d$ we have, by H\"older inequality  with the conjugate exponents $\frac{q}{p}$ and $\frac{q}{q-p}$, 
		\begin{align*}
	\left\|\Big(\Big(\dfx{(f_n)_i}{x_k}-H_{i,k}\Big)\circ f\Big)\,\dfx{(f)_k}{x_j}
		\right\|_p^p&\leq 
			\left\|\Big(\dfx{(f_n)_i}{x_k}-H_{i,k}\Big)\circ f	\right\|_{q}^p\cdot	\left\|\dfx{(f)_k}{x_j}	\right\|_{\frac{pq}{q-p}}^p\\
			&=
			\left\|\dfx{(f_n)_i}{x_k}-H_{i,k}\right\|_{q}^p\cdot	\left\|\dfx{(f)_k}{x_j}
		\right\|_{r}^p,
	\end{align*}
where, in this last step, we do the change of variable $f(x)=y$, noticing that $f$  preserves the volume.
\end{proof}

\section*{Acknowledgements}
The authors  were partially financed by Portuguese Funds through FCT  - `Funda\c{c}\~ao para a Ci\^encia e a Tecnologia' within the Project
UID/00013/2025: Centro de Matemática da Universidade do Minho (CMAT/UM).

\end{document}